\newcommand{\auth}[0]{{Tobias Fritz and Paolo Perrone}}
\newcommand{\tit}[0]{{A Criterion for Kan Extensions of Lax Monoidal Functors}}
\newcommand{\kw}[0]{{Kan extension, lax monoidal functor, pseudomonad, pseudoalgebra, lax morphism}}
\numberwithin{equation}{section}
\theoremstyle{plain}
\newtheorem{thm}{Theorem}[section]
\theoremstyle{definition}
\newtheorem{remark}[thm]{Remark}
\newcommand{\Z}{\mathbb{Z}}
\newcommand{\N}{\mathbb{N}}
\newcommand{\cat}[1]{{\mathsf{#1}}} 
\newcommand{\ar}[2][]{\arrow{#2}{#1}}
\newcommand{\id}{\mathrm{id}} 
\newcommand{\List}{\mathrm{List}}
\let\originalleft\left
\let\originalright\right
\renewcommand{\left}{\mathopen{}\mathclose\bgroup\originalleft}
\renewcommand{\right}{\aftergroup\egroup\originalright}
\tikzset{shorten <>/.style={shorten >=#1,shorten <=#1}}
\title{\tit}
\author{\auth}
\affil{Max Planck Institute for Mathematics in the Sciences,\\ Leipzig, Germany}
\date{}
\begin{document}

\maketitle

\begin{abstract}
In this mainly expository note, we state a criterion for when a left Kan extension of a lax monoidal functor along a strong monoidal functor can itself be equipped with a lax monoidal structure, in a way that results in a left Kan extension in $\cat{MonCat}$. This belongs to the general theory of algebraic Kan extensions, as developed by Melli\`es--Tabareau, Koudenburg and Weber, and is very close to an instance of a theorem of Koudenburg. We find this special case particularly important due to its connections with the theory of graded monads.
\end{abstract}

\section{Introduction}

An \emph{algebraic Kan extension}~\cite{algkanweber, koudenburg} is a Kan extension in the 2-category of algebras of a 2-monad. Intuitively, it is a Kan extension which preserves a given algebraic structure. An important example is a \emph{monoidal Kan extension}, which is a Kan extension in the 2-category of monoidal categories, lax monoidal functors, and monoidal natural transformations. 

Consider now the following problem: suppose that we have a diagram in $\cat{Cat}$
\begin{equation}
\label{ext}
  \begin{tikzcd}
   \cat{C} \ar[swap]{dr}{G} \ar[""{name=F,below}]{rr}{F} 
    \ar[bend right=75, ""{name=GL,above},phantom]{rr} 
     \ar[Rightarrow,from=F,to=GL,"\lambda"]
    && \cat{D} \\
   & \cat{C'} \ar[swap]{ur}{L}
  \end{tikzcd}
 \end{equation}
exhibiting $(L,\lambda)$ as the left Kan extension of $F$ along $G$, meaning that for every $X : C' \to D$, composition with $\lambda$ gives a bijection
\[
	\cat{Cat}(L,X) \cong \cat{Cat}(F,XG).
\]
Suppose moreover that $\cat{C},\cat{C'}$ and $\cat{D}$ are monoidal categories, and that $F$ and $G$ are lax monoidal functors. Under which conditions can we equip $L$ with a canonical lax monoidal structure inherited from those of $F$ and $G$? And under which conditions does this make $(L,\lambda)$ a Kan extension in the 2-category of monoidal categories and monoidal functors?

Problems of this kind have been considered several times in the literature under various assumptions (Section~\ref{relatedwork}). In some of the approaches, including ours, a relevant requirement is that $G$ should be \emph{strong monoidal}.\footnote{A notable exception is Koudenburg's approach~\cite{koudenburg}, where the usage of double categories allows for $G$ to be merely oplax monoidal.}
An intuitive reason is the following: given two objects $X$ and $Y$ in $\cat{C'}$, a monoidal structure of the functor $L$ should give a map $LX\otimes LY\to L(X\otimes Y)$. The way to obtain such a map from the monoidal structures of $F$ and $G$, together with the universal property, is to apply the multiplication \emph{backward} along $G$ in some sense, and then forward along $F$; see the upcoming~\eqref{cylinderup} and~\eqref{cylinderdown}. 
Another relevant condition is that the monoidal structure of $\cat{D}$ should be compatible with Kan extensions, in the sense that if $(L,\lambda)$ is the Kan extension of $F$ along $G$, then $(L\otimes L,\lambda\otimes\lambda)$ is the Kan extension of $F\otimes F$ along $G\times G$ (see the statement of Theorem~\ref{monoidalkanext} for the rigorous formulation). 
Given these two requirements, we show in Section~\ref{moncat} that $L$ admits a unique lax monoidal structure which turns $\lambda$ into a monoidal natural transformation, and that $(L,\lambda)$ is a Kan extension in the 2-category of monoidal categories and monoidal functors.

\subsection{Application: monads from graded monads}
\label{sec_appl}

Many monads that come up in mathematics and computer science are \emph{graded}, in the sense that they arise from gluing together a family of functors via a colimit. For example, the \emph{free monoid monad} or \emph{list monad} $\mathrm{List}:\cat{Set}\to\cat{Set}$, which assigns to every set $A$ the collection of finite sequences of elements of $A$, is naturally graded by the length of the list,
\begin{equation}\label{list}
	\List(A) = \coprod_{n\in\N} A^n,
\end{equation}
where the cartesian power $A^n$ is the set of lists of length $n$. So if we define $\List_n(A) := A^n$, then we can consider this structure as a lax monoidal functor $\List : \N\to [\cat{Set},\cat{Set}]$, where $\N$ is the monoid of natural numbers under multiplication, considered as a monoidal category with only identity morphisms, and $[\cat{Set},\cat{Set}]$ is the (not locally small) category of endofunctors\footnote{One can avoid this size problem by uncurrying to $\N\times\cat{Set}\to\cat{Set}$~\cite{ssm}, but then our change-of-grading considerations in terms of Kan extensions would no longer be covered by our Theorem~\ref{monoidalkanext}.}. An appealing feature of this perspective is that the multiplication
\[
	\List_m(\List_n(A)) \longrightarrow \List_{mn}(A),
\]
which unfolds an $m\times n$ matrix into a list of length $mn$, is now actually an isomorphism, corresponding to the currying isomorphism $(A^n)^m\cong A^{mn}$. The original list functor~\eqref{list} can be obtained from this by noting that it is the colimit of $\List_n$ over $n$, or equivalently the (pointwise) left Kan extension of $\List:\N\to[\cat{Set},\cat{Set}]$ along the unique strong monoidal functor $\N\to\cat{1}$. But does this construction also recover the monad structure? This is the type of question that we address here, motivated by our investigations into categorical probability theory, where a situation quite similar to $\List$ has come up~\cite{ours_kantorovich}. We will see that the answer is positive in many cases: the complete monad structure can be recovered under mild assumptions, and even results in a Kan extension in $\cat{MonCat}$, the 2-category of monoidal categories, lax monoidal functors, and monoidal transformations. This is the type of situation in which our Theorem~\ref{monoidalkanext} can be applied: it provides a criterion for which the pieces of structure that make up the graded monad can be put together to an ungraded monad.

As a similar example indicating that a positive answer should be expected, let us consider rings and modules graded by a monoid $M$ with neutral element $1$. An $\emph{$M$-graded ring}$ is a family of abelian groups $(R_m)_{m\in M}$ together with a unit element $1\in R_1$ and a family of multiplication maps
\[
	R_m\otimes R_n\longrightarrow R_{mn},
\]
which are homomorphisms of abelian groups and satisfy the usual associativity and unit laws. More concisely, an $M$-graded ring is a lax monoidal functor $M\to\cat{Ab}$. Now taking the direct sums $\bigoplus_m R_m$ turns this graded ring into an ungraded ring. In fact, a very common abuse of notation is to write $\bigoplus_m R_m$ \emph{itself} for the \emph{graded} ring. It is clear that as a functor $\bigoplus_m R_m : 1\to\cat{Ab}$, the underlying abelian group of this ungraded ring is the left Kan extension of $R : M\to\cat{Ab}$ along $M\to\cat{1}$. And indeed also the ring structure of this extension arises from our Theorem~\ref{monoidalkanext}, characterizing the ring $\bigoplus_m R_m$ as a Kan extension in $\cat{MonCat}$. Similar comments apply to other changes of grading $M\to M'$ for nontrivial $M'$.

\subsection{Related work}\label{relatedwork}

$\cat{MonCat}$ is the 2-category of pseudoalgebras of the free monoidal category 2-monad, with lax algebra morphisms and algebraic transformations. So a Kan extension in $\cat{MonCat}$ which is also a Kan extension in $\cat{Cat}$ is an instance of an \emph{algebraic} Kan extension, i.e.~a Kan extension in a 2-category of algebras of a 2-monad $T$ which interacts well with the $T$-algebra structure.

There are many possible variants of statements on algebraic Kan extensions. Does one work with particular 2-monads on particular 2-categories, or more general situations? In the second case, does one assume a strict 2-monad or some kind of weak 2-monad? Pseudoalgebras or lax algebras? Pseudomorphisms, lax, or oplax morphisms? Is the lax or pseudomorphism structure on the extension assumed to be given, or is it constructed in terms of the other data? We now summarize the existing results that we are aware of.

The apparently first results in this direction were obtained by Getzler~\cite{getzler}, who investigated operads and related universal-algebraic structures from the perspective of symmetric monoidal categories. His Proposition 2.3 provides a criterion for when a left Kan extension~\eqref{ext} in $\mathcal{V}$-$\cat{Cat}$ lifts to a left Kan extension of strong symmetric monoidal $\mathcal{V}$-functors between symmetric monoidal $\mathcal{V}$-categories.

Shortly after appears the apparently independent work of Melli\'es and Tabareau~\cite{mt}. They consider a pseudomonad $T$ on a proarrow equipment, and think of it as a framework for categorical algebra, where $T$-algebras play the role of generalized algebraic theories. Their Theorem 1 provides a condition for when a left Kan extension lifts to the 2-category of pseudoalgebras and pseudomorphisms, and this provides insight into when a $T$-algebraic theory has free models.

Paterson~\cite{paterson} has investigated general constructions of lax monoidal functors for functional programming. His Proposition 4 provides a construction of a lax monoidal structure on a left Kan extension~\eqref{ext}, under certain assumptions including that $\cat{D}$ is cartesian monoidal and that the Kan extension is pointwise.

Koudenburg has investigated algebraic Kan extensions as the main theme of~\cite{koudenburg}, working in the setting of double monads on double categories. Also his follow-up work~\cite{hypervirtual} touches upon this subject in Section 7, in the related but even more general setting of hypervirtual double categories. The main results of~\cite{koudenburg}, its Theorem 1.1 and the more general Theorem 5.7, substantially generalize Getzler's theorem, with Theorem~5.7 even considering lax algebras. As we will explain in Remark~\ref{rel_to_koudenburg}, our Theorem~\ref{monoidalkanext} is very close to a special case of Theorem 7.7 of~\cite{hypervirtual}.

Weber~\cite{algkanweber} has also investigated algebraic Kan extensions, motivated by phenomena arising in operad theory, revolving around a notion of \emph{exact square}. His Theorem 2.4.4 builds a lax structure on a Kan extension using a construction similar to the first part of our proof. Part (c) of this theorem then gives a criterion for when this construction results in a left Kan extension of pseudomorphisms, based on his notion of exactness; see Remark~\ref{rel_to_weber}.

Finally, we have also addressed a similar problem for the case of monoidal categories in Appendix B of~\cite{ours_kantorovich}. Our Theorem~B.1 in there is similar to the present Theorem~\ref{monoidalkanext}, but with different assumptions: the result in \cite{ours_kantorovich} requires $L$ already to carry a lax monoidal structure, while the assumptions on the left Kan extension $\lambda$ were relaxed: the natural transformation $\lambda\otimes\lambda$ only needed to be a vertical epimorphism, while we now require even $\lambda\otimes\lambda\otimes\lambda$ to be a Kan extension as well.

In conclusion, due to the overlap with previous work and in particular Koudenburg's results, the present manuscript should not be considered original research, but rather as a convenient reference for Kan extensions of lax monoidal functors, stating and proving an essentially known criterion in a somewhat more pedestrian way than the general 2-monadic theory developed by Melli\`es--Tabareau, Koudenburg, and Weber, who deserve the essential credit.

\section{Kan extensions of lax monoidal functors}\label{moncat}

We first fix some notation. For a monoidal category $\cat{C}$, we denote its structure 1-morphisms as
\begin{equation*}
  e: 1\to \cat{C}, \qquad \otimes:\cat{C}\times\cat{C} \to \cat{C},
\end{equation*}
using a subscript $\cat{C}$ only when necessary for disambiguation. For a lax monoidal functor $F:\cat{C}\to\cat{D}$, we denote its multiplication by $\mu_F : F(-)\otimes F(-)\rightarrow F(-\otimes -)$ and its unit by $\eta_F : 1\to F(1)$. $\cat{MonCat}$ is the 2-category of monoidal categories, lax monoidal functors, and monoidal natural transformations.

For functors $F,\, G : \cat{C}\to\cat{D}$ with $\cat{D}$ monoidal and $\alpha : F \Rightarrow G$ a natural transformation, we also write $\alpha\otimes\alpha$ for the whiskered transformation
\[\begin{tikzcd}
   \cat{C}\times\cat{C} \ar[swap,bend right=40,""{name=G,above}]{rr}{G\times G} \ar[bend left=40,""{name=F,below}]{rr}{F\times F} 
     \ar[Rightarrow,from=F,to=G,"\alpha\times\alpha"]
    && \cat{D}\times\cat{D} \ar{r}{\otimes} & \cat{D} 
\end{tikzcd}\]
which goes from $F\otimes F$ to $G\otimes G$.

\begin{thm}
\label{monoidalkanext}
Suppose that 
\begin{enumerate}
 \item\label{cats_assp} $\cat{C}$, $\cat{C'}$ and $\cat{D}$ are monoidal categories;
 \item $F:\cat{C}\to\cat{D}$ is lax monoidal and $G:\cat{C}\to\cat{C'}$ is strong monoidal;
 \item\label{kan_ext_assp}
 \begin{equation*}
  \begin{tikzcd}
   \cat{C} \ar[swap]{dr}{G} \ar[""{name=F,below}]{rr}{F} 
    \ar[bend right=75, ""{name=GL,above},phantom]{rr} 
     \ar[Rightarrow,from=F,to=GL,"\lambda"]
    && \cat{D} \\
   & \cat{C'} \ar[swap]{ur}{L}
  \end{tikzcd}
 \end{equation*}
 is a left Kan extension in $\cat{Cat}$;
 \item\label{ourpreserve} $\lambda\otimes\lambda$ also makes $L\otimes L$ into the left Kan extension of $F\otimes F$ along $G\times G$,
 \begin{equation*}
  \begin{tikzcd}
   \cat{C}\times\cat{C} \ar[swap]{dr}{G\times G} \ar[""{name=F,below}]{rr}{F\times F} 
    \ar[bend right=50, ""{name=GL,above},phantom]{rr} 
     \ar[Rightarrow,from=F,to=GL,"\lambda\times\lambda"]
    && \cat{D}\times\cat{D} \ar{r}{\otimes} & \cat{D} \\
   & \cat{C'}\times\cat{C'} \ar[swap]{ur}{L\times L}
  \end{tikzcd}
 \end{equation*}
 and similarly for $\lambda\otimes\lambda\otimes\lambda$.
\end{enumerate}
Then we also have that
\begin{enumerate}[resume]
 \item\label{unique_lax_claim}There is a unique lax monoidal structure on $L$ which makes $\lambda$ into a monoidal natural transformation, meaning that it is compatible with the units,
 \begin{equation}\label{unitlambda}
  \begin{tikzcd}
   & \cat{C} \ar[bend left]{dr}{G} \\
   \cat{1} \ar{ur}{e} \ar[near start]{rr}{e} \ar[swap]{dr}{e} 
    \ar[bend left=10,""{name=ECP},phantom]{rr} \ar[bend left=60,""{name=ECG},phantom]{rr}
     \ar[Rightarrow,bend right=20,from=ECP,to=ECG,"\eta_G",swap]
    \ar[bend left=10,""{name=ED},phantom]{dr} \ar[bend left=90,""{name=ECPL,pos=0.6},phantom]{dr}
     \ar[Rightarrow,bend right=20,from=ED,to=ECPL,"\eta_L",swap]
    && \cat{C'} \ar[bend left]{dl}{L} \\
   & \cat{D} 
  \end{tikzcd}
  \quad \equiv \quad
  \begin{tikzcd}
   & \cat{C} \ar[bend left]{dr}{G} \ar[bend right=20]{dd}{F}
    \ar[bend left=10,""{name=F},phantom]{dd} \ar[bend left=90,""{name=GL},phantom]{dd}
     \ar[Rightarrow,from=F,to=GL,"\lambda"] \\
   \cat{1} \ar{ur}{e} \ar[""{name=un, above},swap]{dr}{e} 
    \ar[bend left=90,""{name=ECF,pos=0.4},phantom]{dr} \ar[bend left=10,""{name=ED},phantom]{dr}
     \ar[Rightarrow,bend left=20,from=ED,to=ECF,"\eta_F"]
    && \cat{C'} \ar[bend left]{dl}{L} \\
   &\cat{D} 
  \end{tikzcd}
 \end{equation} 
 and with the multiplications, 
 \begin{equation}\label{multlambda}
  \begin{tikzcd}[column sep=small]
    \cat{C}\times\cat{C} \ar[swap, bend right=50]{dd}{F\times F} 
    \ar[swap, bend right=30,""{name=TF},phantom]{dd} \ar[swap, bend left=55,""{name=TGTL},phantom]{dd}
     \ar[Rightarrow,from=TF,to=TGTL,"\lambda\times\lambda"]
    \ar{rr}{\otimes} \ar[bend left,swap]{dr}{G\times G} 
    \ar[bend left=30,""{name=AG,below,pos=0.65},phantom]{drrr} \ar[bend right=10,""{name=TGA},phantom]{drrr}
     \ar[Rightarrow,from=TGA,to=AG,bend right=20,"\mu_G"]
   && \cat{C} \ar[bend left]{dr}{G} \\
   &  \cat{C'}\times\cat{C'} \ar[bend left,swap]{dl}{L\times L} \ar{rr}{\otimes}
    \ar[bend left=50,""{name=AL},phantom]{dr} \ar[bend right=50,""{name=TLA},phantom]{dr}
     \ar[Rightarrow, bend right=20, from= TLA, to=AL,"\mu_L"]
    && \cat{C'} \ar[bend left]{dl}{L} \\
   \cat{D}\times\cat{D} \ar[swap]{rr}{\otimes} && \cat{D}   
  \end{tikzcd}
  \; \equiv \;
  \begin{tikzcd}
    \cat{C}\times\cat{C} \ar[swap, bend right=50]{dd}{F\times F} \ar{rr}{\otimes}
    \ar[bend right=75,""{name=TFA,pos=0.45},phantom]{ddrr} \ar[bend left=50,""{name=AF,pos=0.4},phantom]{ddrr}
     \ar[Rightarrow,from=TFA,to=AF,"\mu_F",bend left=30]
   && \cat{C} \ar[swap,bend right=50]{dd}{F} \ar[bend left]{dr}{G}
    \ar[bend right=30,""{name=F},phantom]{dd} \ar[bend left=90,""{name=GL},phantom]{dd}
     \ar[Rightarrow, from=F,to=GL,"\lambda"] \\
    &&& \cat{C'} \ar[bend left]{dl}{L} \\
   \cat{D}\times\cat{D} \ar[swap]{rr}{\otimes} && \cat{D}
  \end{tikzcd}
 \end{equation}
 \item\label{lax_kan_claim} With this structure, $\lambda$ also makes $L$ the left Kan extension of $F$ along $G$ in $\cat{MonCat}$.
\end{enumerate}
\end{thm}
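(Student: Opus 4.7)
The strategy is to use the universal properties provided by assumptions~(\ref{kan_ext_assp}) and~(\ref{ourpreserve}) to transport the lax monoidal structure of $F$ uniquely to $L$, and then to reduce the coherence axioms on $L$ to those of $F$ by further applications of those same universal properties. Since $G$ is strong monoidal, $L\eta_G$ is invertible, so~\eqref{unitlambda} uniquely determines a 2-cell $\eta_L:e_\cat{D}\Rightarrow L\,e_\cat{C'}$ (explicitly, $\eta_L$ is the composite of $\eta_F$, the component $\lambda_{e_\cat{C}}$ and $(L\eta_G)^{-1}$). For $\mu_L$, assumption~(\ref{ourpreserve}) says precisely that whiskering with $\lambda\times\lambda$ and composing with the invertible $L\mu_G$ sets up a bijection between 2-cells $L(-)\otimes L(-)\Rightarrow L(-\otimes-)$ and 2-cells $F(-)\otimes F(-)\Rightarrow L(G(-)\otimes G(-))$. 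I take $\mu_L$ to be the unique preimage of the pasting built from $\mu_F$ and $\lambda$, so that~\eqref{multlambda} holds by construction. Uniqueness of $\eta_L$ and $\mu_L$ subject to~\eqref{unitlambda}--\eqref{multlambda} is thereby automatic, and monoidality of $\lambda$ is precisely the content of those equations.

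The coherence axioms for $(L,\mu_L,\eta_L)$ are verified by the same universal-property technique. Each axiom equates two parallel 2-cells between functors $\cat{C'}^n\to\cat{D}$, with $n=1$ for the unit axioms and $n=3$ for associativity; after moving the invertible unitors and associators to one side, each side has as source a tensor power of $L$. Assumptions~(\ref{kan_ext_assp}) and~(\ref{ourpreserve}) then reduce each equation to an equation after whiskering with the matching tensor power of $\lambda$. The whiskered equations unfold, via~\eqref{unitlambda} and~\eqref{multlambda}, into identities involving only $\mu_F,\eta_F,\lambda$ and the invertible $\mu_G,\eta_G$; these hold by the lax monoidal axioms for $F$ and the strong monoidal axioms for $G$. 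This completes part~(\ref{unique_lax_claim}).

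For part~(\ref{lax_kan_claim}), let $(X,\mu_X,\eta_X)$ be a lax monoidal functor $\cat{C'}\to\cat{D}$ and $\alpha:F\Rightarrow XG$ a monoidal natural transformation. Assumption~(\ref{kan_ext_assp}) supplies a unique natural transformation $\beta:L\Rightarrow X$ in $\cat{Cat}$ with $(\beta G)\cdot\lambda=\alpha$, so it remains to show that $\beta$ is itself monoidal. Compatibility with units is an equation of 2-cells out of $\cat{1}$ that follows from the unit part of the monoidality of $\alpha$, the definition of $\eta_L$, and the invertibility of $\eta_G$. Compatibility with multiplications equates two parallel 2-cells $L(-)\otimes L(-)\Rightarrow X(-\otimes-)$; by~(\ref{ourpreserve}) it suffices to verify this after whiskering with $\lambda\times\lambda$, at which point both sides unfold via~\eqref{multlambda} and the multiplication part of the monoidality of $\alpha$ into the same pasting.

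The principal bookkeeping burden is the associativity step in the second paragraph: after whiskering with $\lambda\otimes\lambda\otimes\lambda$ and applying~\eqref{multlambda} repeatedly, one faces a sizeable pasting diagram whose two triangulations must be matched using the pentagon for $F$, the pentagon for $G$, and the invertibility of $\mu_G$. This is precisely why assumption~(\ref{ourpreserve}) demands that the triple tensor power of $\lambda$ remain a Kan extension, with no further powers needed.
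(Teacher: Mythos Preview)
Your proposal is correct and follows essentially the same approach as the paper: define $\eta_L$ explicitly via $\eta_F$, $\lambda$, and $\eta_G^{-1}$; define $\mu_L$ via the universal property of $\lambda\otimes\lambda$; verify the coherence axioms by whiskering with the appropriate tensor power of $\lambda$ and reducing to the axioms for $F$ and $G$; and check monoidality of the induced $\beta$ the same way. One minor terminological slip: the associativity constraint for a lax monoidal functor is a hexagon (it involves the associators of both categories), not a pentagon, but this does not affect the argument.
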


It may help to visualize these and the equations used in the proof below three-dimensionally, by interpreting every rewriting step as a globular 3-cell, and whiskering and composing these 3-cells so as to form a 3-dimensional pasting diagram. Like this,~\eqref{multlambda} becomes a full cylinder, with the two caps formed by $\lambda\times\lambda$ and $\lambda$, and with the three multiplications wrapping around. The same is true for equation~\eqref{unitlambda}, but with the $\lambda\times\lambda$ cap collapsed to a single point, so that one obtains a cone with $\lambda$ on the base.

\tikzstyle{barred} = [decoration={markings, mark=at position 0.5 with {\draw[-] (0,-1.5pt) -- (0,1.5pt);}},postaction={decorate}]	

\begin{remark}
\label{rel_to_koudenburg}
As we explain now, this theorem is very similar to an instance of Koudenburg's Theorem~7.7(a) in~\cite{hypervirtual}, and is also closely related to the horizontal dual of his Theorem~5.7(a) in~\cite{koudenburg}. In this remark, all theorem numbers refer to~\cite{hypervirtual}, unless stated otherwise.

We use the hypervirtual double category $\cat{(Set,Set')\text{-}Prof}$ of Example~1.8. The Kan extensions that we work with are \emph{weak Kan extensions}, as per Definition~4.1, since by Proposition~2.18, Kan extensions in the vertical 2-category $\cat{Cat}$ along a functor $G$ are equivalently weak Kan extensions along the companion $G_*$.

We consider the free strict monoidal category double monad $T(-) = \coprod_{n\in\N} (-)^{\times n}$ (Example~6.2). Thus by Theorem~7.7(a), we get the following: if we have categories, functors, and a left Kan extension $\lambda$ as in \ref{cats_assp}--\ref{kan_ext_assp} of our assumptions above, and if moreover
 \begin{equation}
 \label{Tpreserve}
  \begin{tikzcd}
   T\cat{C} \ar[swap]{dr}{TG} \ar[""{name=F,below}]{rr}{TF} 
    \ar[bend right=50, ""{name=GL,above},phantom]{rr} 
     \ar[Rightarrow,from=F,to=GL,"T\lambda"]
    && T\cat{D} \ar{r}{\otimes} & \cat{D} \\
   & T\cat{C'} \ar[swap]{ur}{TL}
  \end{tikzcd}
 \end{equation}
is a Kan extension in $\cat{Cat}$, and similarly
 \begin{equation}
 \label{TTpreserve}
  \begin{tikzcd}
   TT\cat{C} \ar[swap]{dr}{TTG} \ar[""{name=F,below}]{rr}{TTF} 
    \ar[bend right=50, ""{name=GL,above},phantom]{rr} 
     \ar[Rightarrow,from=F,to=GL,"TT\lambda"]
    && TT\cat{D} \ar{r}{T\otimes} & T\cat{D} \ar{r}{\otimes} & \cat{D} \\
   & TT\cat{C'} \ar[swap]{ur}{TTL}
  \end{tikzcd}
 \end{equation}
is a Kan extension in $\cat{Cat}$, then our claims \ref{unique_lax_claim} and~\ref{lax_kan_claim} follow. Now the main assumptions \eqref{Tpreserve}--\eqref{TTpreserve} are clearly very similar to our assumption~\ref{ourpreserve}, and we think of them as the unbiased version of~\ref{ourpreserve}. However, technically we do not know how the two assumptions are precisely related, although Example~5.3 of~\cite{koudenburg} gives a partial result. It is not even clear whether the fact that $\lambda\otimes\lambda$ and $\lambda\otimes\lambda\otimes\lambda$ are Kan extensions implies that arbitrary tensor powers of $\lambda$ are Kan extension as well, since the additional functor $X : T\cat{C'} \to \cat{D}$ that one needs to consider may not interact well with products.

Nevertheless, due to the similarity of the assumptions, we regard our Theorem~\ref{monoidalkanext} as essentially an instance of Koudenburg's work. In particular, the proofs of the two theorems are very similar as well, and we expect there to be a common generalization that we are not aware of.

We therefore think that the essential credit for our Theorem~\ref{monoidalkanext} should go to Koudenburg, as well as to the other authors who have worked on the general theory of algebraic Kan extensions.
\end{remark}

\newcommand{\initialmorph}{\text{!`}}	

\begin{remark}
\label{rel_to_weber}
There are also some close similarities with the work of Weber, and in particular his Theorem 2.4.4 from~\cite{algkanweber}. However, technically he imposes an exactness condition which plays a central role in his approach, and which does not hold in many of our motivating examples described in Section~\ref{sec_appl}. Concretely, let $M := 1$ be the single-element monoid and $M' := \Z_2$ the group with two elements, and let us consider left Kan extension of lax monoidal functors along the unique monoid homomorphism $\initialmorph : 1 \to \Z_2$, corresponding to the change of grading which considers an ungraded algebra as a trivially $\Z_2$-graded algebra. With $T : \cat{Cat} \to \cat{Cat}$ the monoidal category 2-monad as above, Weber's approach would require the invertible 2-cell
\[
  \begin{tikzcd}
   T1 \ar{d}{\otimes} \ar[""{name=U,below}]{rr}{T\initialmorph} 
    && T\Z_2 \ar{d}{\otimes} \\
   1 \ar[swap,""{name=D,above}]{rr}{\initialmorph} && \Z_2
     \ar[Rightarrow,from=U,to=D,"\cong"]
  \end{tikzcd}
\]
to be exact, which means that composing with it should turn a left Kan extension along $\initialmorph$ into a left Kan extension along $T\initialmorph$. However, this is not the case, as one can see explicitly by noting that $T\initialmorph$ is equal to the inclusion $\N \to \N^2, \: m \mapsto (m,0)$, and left extending a trivially $\N$-graded algebra results in a trivially $\N^2$-graded algebra, while using $\otimes : T\Z_2 \to \Z_2$ to pull back the trivial $\Z_2$-grading results in an $\N^2$-graded algebra which is not generally concentrated in degree $0$. Alternatively, as pointed out to us by Mark Weber, one can also apply~\cite[Proposition~4.5.1]{algkanweber} in order to reach the same conclusion.
\end{remark}

\begin{proof}[Proof of Theorem~\ref{monoidalkanext}]
The following definitions of $\eta_L$ and $\mu_L$ make $\lambda$ monoidal by construction:
\begin{itemize}
 \item Consider the natural transformation $e_\cat{D}\Rightarrow L\circ e_\cat{C'}$ given by the composition
 \begin{equation}\label{coneup}
  \begin{tikzcd}[column sep=small]
   &&& \cat{C'} \ar[bend left=50,""{name=L,below,pos=0.1}]{dd}{L}  \\
   \cat{1} \ar[bend left,""{name=ECP,below,pos=0.4}]{urrr}{e} \ar[bend right,swap,""{name=ED,above}]{drrr}{e} \ar{rr}{e} && \ar[Rightarrow, bend left=15, from=ED,"\eta_F"] \cat{C} \ar[Rightarrow,bend left=10,swap,to=ECP,"\eta_G^{-1}"] \ar[bend left,swap]{ur}{G} \ar[swap,bend right,""{name=F,above}]{dr}{F} 
    \ar[Rightarrow,from=F,to=L,"\lambda",swap] \\
   &&& \cat{D}  
  \end{tikzcd}
 \end{equation}
 This has already the desired form
 \begin{equation}\label{conedown}
  \begin{tikzcd}[column sep=small]
    &&& |[alias=ECPL]| \cat{C'} \ar[bend left=50]{dd}{L}  \\
   \cat{1} \ar[bend left,""{name=ECP,below,pos=0.4}]{urrr}{e} \ar[bend right,swap,""{name=ED,above}]{drrr}{e}  
    \ar[bend right=20,swap,""{name=ED,above},phantom]{drrr} 
     \ar[Rightarrow,from=ED,to=ECPL,"\eta_L",swap] \\
    &&& \cat{D}
  \end{tikzcd}
 \end{equation}
 It is clear that this $\eta_L$ is the only possible choice for the unit map which satisfies \eqref{unitlambda}.
 \item Consider now the natural transformation $F\otimes F\Rightarrow L\circ  (G\otimes G)$ given by the composition
 \begin{equation}\label{cylinderup}
  \begin{tikzcd}[column sep=small]
   & \cat{C'}\times\cat{C'}  \ar{rr}{ \otimes} && \cat{C'} \ar[bend left=50,""{name=L,below,pos=0.1}]{dd}{L}  \\
   \cat{C}\times\cat{C} \ar[bend left]{ur}{G\times G} \ar[swap, bend right]{dr}{F\times F} \ar{rr}{ \otimes} 
    \ar[bend right=20,""{name=AG,pos=0.6},phantom]{urrr} \ar[bend left=20,""{name=TGA,pos=0.4},phantom]{urrr}
     \ar[Rightarrow, bend left=20, from= AG, to=TGA,"\mu_G^{-1}",swap]
   \ar[bend left=30,""{name=AF,below,pos=0.6},phantom]{drrr} \ar[bend right=15,""{name=TFA,pos=0.4},phantom]{drrr}
     \ar[Rightarrow,from=TFA,to=AF,bend left=20,"\mu_F",swap]
   && \cat{C} \ar[bend left,swap]{ur}{G} \ar[swap,bend right,""{name=F,above}]{dr}{F}
    \ar[Rightarrow,from=F,to=L,"\lambda",swap] \\
   & \cat{D}\times\cat{D} \ar[swap]{rr}{ \otimes} && \cat{D}
  \end{tikzcd}
 \end{equation}
 By the universal property of the Kan extension of $\lambda\otimes\lambda$, there exists a unique natural transformation $\mu_L:L\otimes L\Rightarrow L(-\otimes -)$ such that \eqref{cylinderup} is equal to
 \begin{equation}\label{cylinderdown}
  \begin{tikzcd}[column sep=small]
   & \cat{C'}\times\cat{C'} \ar[swap,bend left=50,""{name=LL,below,pos=0.1}]{dd}{L\times L} \ar{rr}{ \otimes}
    \ar[bend right=38,""{name=TLA,pos=0.7},phantom]{ddrr} \ar[bend left=68,""{name=AL,pos=0.5},phantom]{ddrr}
     \ar[Rightarrow,from=TLA,to=AL,"\mu_L",bend right=20]
   & \,& \cat{C'} \ar[bend left=50]{dd}{L}  \\
   \cat{C}\times\cat{C} \ar[bend left]{ur}{G\times G} \ar[swap, bend right,""{name=FF,above}]{dr}{F\times F} 
    \ar[Rightarrow,from=FF,to=LL,"\lambda\times\lambda",pos=0.6] \\
   & \cat{D}\times\cat{D} \ar[swap]{rr}{ \otimes} && \cat{D}
  \end{tikzcd}
 \end{equation}
 By the universality of $\lambda\otimes\lambda$, it is clear that this $\mu_L$ is the only possible choice for the multiplication which satisfies \eqref{multlambda}.
\end{itemize}

Next, we have to check that this actually defines a lax monoidal structure, corresponding to unitality and associativity.
\begin{itemize}
 \item The left unit condition says that the composition
 \begin{equation}\label{prismup}
  \begin{tikzcd}
   & \cat{1}\times\cat{D} \ar{rr}{e\times \id } && \cat{D}\times\cat{D} \ar{dd}{\otimes} \\
   \cat{1}\times\cat{C'} \ar{ur}{\id \times L} \ar[swap]{ddrr}{\cong} \ar{rr}{e\times \id } 
    \ar[bend left=10,""{name=IDLEID,pos=0.4},phantom]{urrr} \ar[bend right=10,""{name=EIDLL,pos=0.6},phantom]{urrr}
     \ar[Rightarrow,from=IDLEID,to=EIDLL,"\eta_L\times\id "]
    \ar[bend left=5,""{name=ISO},phantom]{ddrr} \ar[bend left=70,""{name=EIDT,pos=0.65},phantom]{ddrr}
     \ar[Rightarrow,from=EIDT,to=ISO,"\ell"]
    && \cat{C'}\times\cat{C'} \ar[near start]{ur}{L\times L} \ar{dd}{\otimes} 
     \ar[bend left=60,""{name=LLT},phantom]{dr} \ar[bend right=60,""{name=TL},phantom]{dr} 
      \ar[Rightarrow,from=LLT,to=TL,"\mu_L"] \\
   & \, && \cat{D} \\
   && \cat{C'} \ar[swap]{ur}{L}
  \end{tikzcd}
  \end{equation}
 has to be equal to the composition
  \begin{equation}\label{prismdown}
  \begin{tikzcd}
   & \cat{1}\times\cat{D} \ar[swap]{ddrr}{\cong} \ar{rr}{e\times \id }  
    \ar[bend left=5,""{name=ISO},phantom]{ddrr} \ar[bend left=70,""{name=EIDT,pos=0.6},phantom]{ddrr}
     \ar[Rightarrow,from=EIDT,to=ISO,"\ell"]
    && \cat{D}\times\cat{D} \ar{dd}{\otimes} \\
   \cat{1}\times\cat{C'} \ar{ur}{\id \times L} \ar[swap]{ddrr}{\cong}  && \, \\
   &&& \cat{D} \\
   && \cat{C'} \ar[swap]{ur}{L}
  \end{tikzcd}
 \end{equation}
 where $\ell$ denotes the left unitor (of both categories), and the empty space commutes trivially. It may help to visualize these two composites as forming the boundary of a triangular prism, with the two $\ell$'s making up the triangular faces. What we would like to do is to show that these two composites are equal by providing a sequence of rewrites which fills this prism with 3-cells represented by equations. But instead of doing this directly, we precompose \eqref{prismdown} with $G$ and $\lambda$,
 \begin{equation*}
  \begin{tikzcd}
   && \cat{1}\times\cat{D} \ar[swap]{ddrr}{\cong} \ar{rr}{e\times \id } 
    \ar[bend left=5,""{name=ISO},phantom]{ddrr} \ar[bend left=70,""{name=EIDT,pos=0.6},phantom]{ddrr}
     \ar[Rightarrow,from=EIDT,to=ISO,"\ell"]
    && \cat{D}\times\cat{D} \ar{dd}{\otimes}  \\
   \cat{1}\times\cat{C} \ar[swap]{ddrr}{\cong} \ar[bend left=12,""{name=F,below}]{urr}{\id \times F} \ar{r}{\id \times G} & \ar[Rightarrow,from=F,"\id  \times\lambda"] \cat{1}\times\cat{C'} \ar[swap]{ur}{\id \times L} \ar[swap]{ddrr}{\cong}  && \, \\
   &&&& \cat{D} \\
   && \cat{C} \ar[swap]{r}{G} & \cat{C'} \ar[swap]{ur}{L}
  \end{tikzcd}
  \end{equation*}
 and now start to apply rewrites. As the first rewrite, we note that this composite is trivially equal to
  \begin{equation*}
  \begin{tikzcd}
   && \cat{1}\times\cat{D} \ar[swap]{ddrr}{\cong} \ar{rr}{e\times \id } 
    \ar[bend left=5,""{name=ISO},phantom]{ddrr} \ar[bend left=70,""{name=EIDT,pos=0.6},phantom]{ddrr}
     \ar[Rightarrow,from=EIDT,to=ISO,"\ell"] 
    && \cat{D}\times\cat{D} \ar{dd}{\otimes}  \\
   \cat{1}\times\cat{C} \ar[swap]{ddrr}{\cong} \ar[bend left=12,""{name=idF,below}]{urr}{\id \times F}  &&& \, \\
   &&&& \cat{D} \\
   && \cat{C} \ar[bend left=12,""{name=F,below}]{urr}{F} \ar[swap]{r}{G} & \ar[Rightarrow,from=F,"\lambda"] \cat{C'} \ar[swap]{ur}{L}
  \end{tikzcd}
 \end{equation*}
 By the left unit condition for $F$, this is furthermore equal to
 \begin{equation*}
  \begin{tikzcd}
   && \cat{1}\times\cat{D} \ar{rr}{e\times \id } && \cat{D}\times\cat{D} \ar{dd}{\otimes}  \\
   \cat{1}\times\cat{C} \ar[swap]{ddrr}{\cong} \ar[bend left=12,""{name=idF,below}]{urr}{\id \times F} \ar{rr}{e\times \id }  
    \ar[bend left=8,""{name=IDFEID},phantom]{urrrr} \ar[bend right=8,""{name=EIDFF},phantom]{urrrr} 
     \ar[Rightarrow,bend right=10,from=IDFEID,to=EIDFF,"\eta_F\times\id"]
    && \cat{C}\times\cat{C} \ar[bend left=12,swap]{urr}{F\times F} \ar{dd}{\otimes}  
    \ar[bend left=60,""{name=FFT,pos=0.6},phantom]{drr} \ar[bend right=60,""{name=TF,pos=0.35},phantom]{drr} 
     \ar[Rightarrow,bend right=10,from=FFT,to=TF,"\mu_F"]
    & \, \\
   & \, &&& \cat{D} \\
   && \cat{C} \ar[bend left=12,""{name=F,below}]{urr}{F} \ar[swap]{r}{G} & \ar[Rightarrow,from=F,"\lambda"] \cat{C'} \ar[swap]{ur}{L}
  \end{tikzcd}
 \end{equation*}
 which because of the monoidal structure of $\lambda$, as in~\eqref{multlambda}, is first equal to
 \begin{equation*}
  \begin{tikzcd}
   && \cat{1}\times\cat{D} \ar{rr}{e\times \id } && \cat{D}\times\cat{D} \ar{dd}{\otimes}  \\
   \cat{1}\times\cat{C} \ar[swap]{ddrr}{\cong} \ar[bend left=12,""{name=idF,below}]{urr}{\id \times F} \ar{rr}{e\times \id }  
    \ar[bend left=5,""{name=ISO},phantom]{ddrr} \ar[bend left=70,""{name=EIDT,pos=0.6},phantom]{ddrr}
     \ar[Rightarrow,from=EIDT,to=ISO,"\ell"]
    \ar[bend left=10,""{name=IDFEID,pos=0.4},phantom]{urrrr} \ar[bend right=5,""{name=EIDFF,pos=0.4},phantom]{urrrr} 
      \ar[Rightarrow,bend right=10,from=IDFEID,to=EIDFF,"\eta_F\times\id"]
    && \cat{C}\times\cat{C} \ar[bend left=12,swap,""{name=F,below}]{urr}{F\times F} \ar{dd}{\otimes}  \ar{r}{G\times G} 
     \ar[bend left=50,""{name=GGT,pos=0.45},phantom]{ddr} \ar[bend right=50,""{name=TG,pos=0.55},phantom]{ddr}
      \ar[Rightarrow, from=GGT, to=TG,"\mu_G"]
     & \ar[Rightarrow,from=F,"\lambda\times\lambda",swap,near start] \cat{C'}\times\cat{C'} \ar{dd}{\otimes} \ar[pos=0.1,swap]{ur}{L\times L} 
     \ar[bend left=60,""{name=LLT},phantom]{dr} \ar[bend right=60,""{name=TL},phantom]{dr} 
      \ar[Rightarrow,from=LLT,to=TL,"\mu_L"] \\
   & \, &&& \cat{D} \\
   && \cat{C}  \ar[swap]{r}{G} &  \cat{C'} \ar[swap]{ur}{L}
  \end{tikzcd}
 \end{equation*}
 and then, by \eqref{unitlambda}, can be written as
 \begin{equation*}
  \begin{tikzcd}
   && \cat{1}\times\cat{D} \ar{rr}{e\times \id }  && \cat{D}\times\cat{D} \ar{dd}{\otimes}  \\
   \cat{1}\times\cat{C} \ar[swap]{ddrr}{\cong} \ar[bend left=12,""{name=idF,below}]{urr}{\id \times F} \ar[swap]{rr}{e\times \id } \ar[bend left=20,""{name=EG,below}]{rrr}{e\times G} 
    \ar[bend left=8,""{name=IDFEID},phantom]{urrrr} \ar[bend right=8,""{name=EGLL,pos=0.7},phantom]{urrrr} 
     \ar[Rightarrow,bend left=10,from=IDFEID,to=EGLL,"\eta_L\times \lambda",pos=0.9]
    \ar[bend left=5,""{name=ISO},phantom]{ddrr} \ar[bend left=70,""{name=EIDT,pos=0.6},phantom]{ddrr}
     \ar[Rightarrow,from=EIDT,to=ISO,"\ell"]
    && \ar[Rightarrow,from=EG,"\eta_G\times \id ",swap,near start] \cat{C}\times\cat{C} \ar{dd}{\otimes}  \ar[swap]{r}{G\times G} 
     \ar[bend left=50,""{name=GGT,pos=0.45},phantom]{ddr} \ar[bend right=50,""{name=TG,pos=0.55},phantom]{ddr}
      \ar[Rightarrow, from=GGT, to=TG,"\mu_G"]
     &  \cat{C'}\times\cat{C'} \ar{dd}{\otimes} \ar{ur}{L\times L} 
     \ar[bend left=60,""{name=LLT},phantom]{dr} \ar[bend right=60,""{name=TL},phantom]{dr} 
      \ar[Rightarrow,from=LLT,to=TL,"\mu_L"] \\
   & \, &&& \cat{D} \\
   && \cat{C}  \ar[swap]{r}{G} &  \cat{C'} \ar[swap]{ur}{L}
  \end{tikzcd}
 \end{equation*}
 which in turn equals
 \begin{equation*}
  \begin{tikzcd}
   && \cat{1}\times\cat{D} \ar{rr}{e\times \id } && \cat{D}\times\cat{D} \ar{ddd}{\otimes} \\
   && |[alias=OC]| \cat{1}\times\cat{C'} \ar{u}{\id \times L} \ar[near start]{dr}{e\times \id }  
    \ar[bend left=30,""{name=IDLEID,pos=0.2},phantom]{urr} \ar[bend right=30,""{name=EIDLL,pos=0.4},phantom]{urr}
     \ar[Rightarrow, bend left=20,from=IDLEID,to=EIDLL,"\eta_L\times\id "] \\
   \cat{1}\times\cat{C} \ar[swap]{ddrr}{\cong} \ar[bend left=12]{uurr}{\id \times F} \ar[swap]{rr}{e\times \id } \ar{urr}{\id \times G} 
    \ar[bend left=5,""{name=IDF},phantom]{uurr} \ar[bend right=15,""{name=IDGIDL,pos=0.6},phantom]{uurr} 
     \ar[Rightarrow,from=IDF,to=IDGIDL,"\id\times\lambda"]
    \ar[bend left=30,""{name=IDGEID,pos=0.6},phantom]{rrr} \ar[bend left=10,""{name=EIDGG,pos=0.65},phantom]{rrr} 
      \ar[Rightarrow,from=IDGEID,to=EIDGG,"\eta_G\times\id",swap]
    \ar[bend left=5,""{name=ISO},phantom]{ddrr} \ar[bend left=70,""{name=EIDT,pos=0.6},phantom]{ddrr}
     \ar[Rightarrow,from=EIDT,to=ISO,"\ell"]
     &&  \cat{C}\times\cat{C} \ar{dd}{\otimes}  \ar[swap]{r}{G\times G} 
     \ar[bend left=50,""{name=GGT,pos=0.45},phantom]{ddr} \ar[bend right=50,""{name=TG,pos=0.55},phantom]{ddr}
      \ar[Rightarrow, from=GGT, to=TG,"\mu_G"]
     &  \cat{C'}\times\cat{C'} \ar{dd}{\otimes} \ar{uur}{L\times L} 
     \ar[bend left=90,""{name=LLT,pos=0.45},phantom]{dr} \ar[bend right=50,""{name=TL},phantom]{dr} 
      \ar[Rightarrow,from=LLT,to=TL,"\mu_L"] \\
   & \, &&& \cat{D} \\
   && \cat{C}  \ar[swap]{r}{G} &  \cat{C'} \ar[swap]{ur}{L}
  \end{tikzcd}
 \end{equation*}
 By the left unit condition for $G$, this is again equal to
 \begin{equation*}
 \begin{tikzcd}
   && \cat{1}\times\cat{D} \ar{rr}{e\times \id } && \cat{D}\times\cat{D} \ar{dd}{\otimes}  \\
   \cat{1}\times\cat{C} \ar[swap]{ddrr}{\cong} \ar[bend left=12,""{name=idF,below}]{urr}{\id \times F} \ar[swap]{r}{\id \times G} &  
   \ar[Rightarrow,from=idF,"\id \times\lambda"]
    \cat{1}\times\cat{C'} \ar[swap]{ddrr}{\cong} \ar[swap]{ur}{\id \times L} \ar[swap]{rr}{e\times \id } 
    \ar[bend left=10,""{name=IDLEID,pos=0.4},phantom]{urrr} \ar[bend right=10,""{name=EIDLL,pos=0.6},phantom]{urrr}
     \ar[Rightarrow,from=IDLEID,to=EIDLL,"\eta_L\times\id "]
    \ar[bend left=5,""{name=ISO},phantom]{ddrr} \ar[bend left=70,""{name=EIDT,pos=0.65},phantom]{ddrr}
     \ar[Rightarrow,from=EIDT,to=ISO,"\ell"]
    &&  \cat{C'}\times\cat{C'} \ar{dd}{\otimes} \ar{ur}{L\times L}  
     \ar[bend left=60,""{name=LLT},phantom]{dr} \ar[bend right=60,""{name=TL},phantom]{dr} 
      \ar[Rightarrow,from=LLT,to=TL,"\mu_L"] \\
   && \, && \cat{D} \\
   && \cat{C}  \ar[swap]{r}{G} &  \cat{C'} \ar[swap]{ur}{L}
  \end{tikzcd}
 \end{equation*}
 This is the same as \eqref{prismup}, precomposed with $G$ and $\lambda$. 
 Now since $\lambda$ is a Kan extension, also $e\otimes\lambda$ is one, and so by its universal property, we can cancel the precomposition, obtaining that \eqref{prismup} is equal to \eqref{prismdown}.
 
 \item The right unit condition works analogously, with $\cat{1}\times -$ replaced by $-\times\cat{1}$ and the left unitors $\ell$ replaced by the right unitors.
 
 \item Associativity means that the composite, omitting the symbol $\times$ for brevity,
 \begin{equation}\label{cubeup}
  \begin{tikzcd}
   & \cat{C'}\, \cat{C'} \ar{dr}{\otimes} \\
   \cat{C'}\, \cat{C'}\, \cat{C'} \ar{dr}{\otimes\, \id } \ar{ur}{\id \,\otimes} \ar[swap]{dd}{L\,  L\,  L} 
    \ar[bend left=30,""{name=IDTT},phantom]{rr} \ar[bend right=25,""{name=TIDT,pos=0.51},phantom]{rr} 
     \ar[Rightarrow,from=TIDT,to=IDTT,"a"]
    \ar[bend right=20,""{name=LLLT,pos=0.55},phantom]{dddr} \ar[bend left=20,""{name=TLL,pos=0.45},phantom]{dddr} 
     \ar[Rightarrow,from=LLLT,to=TLL,"\mu_L\,\id"]
    && \cat{C'} \ar{dd}{L} \\ 
   & \cat{C'}\, \cat{C'} \ar[swap]{dd}{L\,  L} \ar{ur}{\otimes}
    \ar[bend right=60,""{name=LLT,pos=0.6},phantom]{dr} \ar[bend left=60,""{name=TL,pos=0.4},phantom]{dr} 
      \ar[Rightarrow,from=LLT,to=TL,"\mu_L"] \\
   \cat{D}\, \cat{D}\, \cat{D} \ar[swap]{dr}{\otimes\, \id } && \cat{D} \\
   & \cat{D\,  D}  \ar[swap]{ur}{\otimes}
  \end{tikzcd}
 \end{equation}
 is equal to the composite
 \begin{equation}\label{cubedown}
  \begin{tikzcd}
   & \cat{C'}\, \cat{C'} \ar{dr}{\otimes} \ar{dd}{L\,  L}
    \ar[bend right=20,""{name=LLT,pos=0.55},phantom]{dddr} \ar[bend left=20,""{name=TL,pos=0.45},phantom]{dddr} 
     \ar[Rightarrow,from=LLT,to=TL,"\mu_L",swap] \\
   \cat{C'}\, \cat{C'}\, \cat{C'} \ar{ur}{\id \,\otimes} \ar[swap]{dd}{L\,  L\,  L} 
    \ar[bend right=60,""{name=LLLT,pos=0.55},phantom]{dr} \ar[bend left=60,""{name=TLL,pos=0.45},phantom]{dr} 
      \ar[Rightarrow,from=LLLT,to=TLL,"\id\,\mu_L",pos=0.4]
    && \cat{C'} \ar{dd}{L}  \\
   & \cat{D\,  D}  \ar{dr}{\otimes} \\
   \cat{D}\, \cat{D}\, \cat{D}  \ar[swap]{ur}{\id \,\otimes} \ar[swap]{dr}{\otimes\, \id }
    \ar[bend left=30,""{name=IDTT},phantom]{rr} \ar[bend right=25,""{name=TIDT,pos=0.51},phantom]{rr} 
     \ar[Rightarrow,from=TIDT,to=IDTT,"a"]
    && \cat{D} \\
   & \cat{D\,  D} \ar[swap]{ur}{\otimes}
  \end{tikzcd}
 \end{equation}
 where $a$ denotes the associator (of both categories). We think of all these 2-cells as the six faces of a cube. In order to prove the equality, we precompose the diagram \eqref{cubeup} with $G\,  G\,  G$ and $\lambda\, \lambda\, \lambda$:
 \begin{equation*}
  \begin{tikzcd}[row sep=large,column sep=large]
   && \cat{C'}\, \cat{C'} \ar{dr}{\otimes} \\
   & \cat{C'}\, \cat{C'}\, \cat{C'} \ar{r}{\otimes\, \id } \ar{ur}{\id \,\otimes} \ar{dd}{L\,  L\,  L} 
    \ar[bend left=45,""{name=IDTT,pos=0.55},phantom]{rr} \ar[bend left=15,""{name=TIDT,pos=0.51},phantom]{rr} 
     \ar[Rightarrow,from=TIDT,to=IDTT,"a"]
    \ar[bend right=20,""{name=LLLT,pos=0.5},phantom]{dddr} \ar[bend left=50,""{name=TLL,pos=0.25},phantom]{dddr} 
     \ar[Rightarrow,from=LLLT,to=TLL,"\mu_L\,\id",swap] 
    & \cat{C'}\, \cat{C'} \ar[swap]{ddd}{L\,  L} \ar{r}{\otimes} 
    \ar[bend right=90,""{name=LLT,pos=0.7},phantom]{ddr} \ar[bend left=50,""{name=TL,pos=0.4},phantom]{ddr} 
      \ar[Rightarrow,from=LLT,to=TL,"\mu_L"]
    & \cat{C'} \ar{dd}{L} \\ 
   \cat{C}\, \cat{C}\, \cat{C} \ar[bend left=10]{ur}{G\,G\,G} \ar[bend right=10,swap,""{name=FFF,above}]{dr}{F\,F\,F} 
    \ar[bend left=20,""{name=FFF},phantom]{dr} \ar[bend left=90,""{name=GGGLLL},phantom]{dr} 
     \ar[Rightarrow,from=FFF,to=GGGLLL,"\lambda\,\lambda\,\lambda"] \\
   &\cat{D}\, \cat{D}\, \cat{D}  \ar[swap]{dr}{\otimes\, \id } && \cat{D} \\
   && \cat{D\,  D} \ar[swap]{ur}{\otimes}
  \end{tikzcd}
 \end{equation*}
 Using the fact that \eqref{cylinderup} is equal to \eqref{cylinderdown} on the first two components, and that the identity commutes with $\lambda$ on the third component, this is equal to
 \begin{equation*}
  \begin{tikzcd}[row sep=large,column sep=large]
   && \cat{C'}\, \cat{C'} \ar{dr}{\otimes} \\
   & \cat{C'}\, \cat{C'}\, \cat{C'} \ar{r}{\otimes\, \id } \ar{ur}{\id \,\otimes}  
    \ar[bend left=45,""{name=IDTT,pos=0.55},phantom]{rr} \ar[bend left=15,""{name=TIDT,pos=0.51},phantom]{rr} 
     \ar[Rightarrow,from=TIDT,to=IDTT,"a"]
    &  \cat{C'}\,\cat{C'} \ar[near start]{ddd}{L\,  L} \ar{r}{\otimes} 
    \ar[bend right=90,""{name=LLT,pos=0.7},phantom]{ddr} \ar[bend left=50,""{name=TL,pos=0.4},phantom]{ddr} 
      \ar[Rightarrow,from=LLT,to=TL,"\mu_L"]
    & \cat{C'} \ar{dd}{L} \\ 
   \cat{C}\, \cat{C}\, \cat{C} \ar[bend left=10]{ur}{G\,G\,G} \ar[bend right=10,swap,""{name=FFF,above}]{dr}{F\,F\,F} \ar{r}{\otimes\, \id } 
    \ar[bend left=12,""{name=GGGTID},phantom]{urr} \ar[bend right=10,""{name=TIDGG,pos=0.46},phantom]{urr} 
     \ar[Rightarrow,from=TIDGG,to=GGGTID,"\mu_G^{-1}\,\id"]
    \ar[bend left=30,""{name=TIDFF,pos=0.38},phantom]{ddrr} \ar[bend left=10,""{name=FFFTID,pos=0.45},phantom]{ddrr} 
     \ar[Rightarrow,from=FFFTID,to=TIDFF,"\mu_F\,\id"]
    & \cat{C}\, \cat{C} \ar{ur}{G\,G} \ar{ddr}{F\,F} 
    \ar[bend left=30,""{name=FF,pos=0.4},phantom]{ddr} \ar[bend left=90,""{name=GGLL,pos=0.35},phantom]{ddr} 
     \ar[Rightarrow,from=FF,to=GGLL,"\lambda\,\lambda"] \\
   &\cat{D}\, \cat{D}\, \cat{D}  \ar[swap]{dr}{\otimes\, \id } && \cat{D} \\
   && \cat{D\,  D} \ar[swap]{ur}{\otimes}
  \end{tikzcd}
 \end{equation*}
 Using again  that \eqref{cylinderup} is equal to \eqref{cylinderdown}, we get
 \begin{equation*}
  \begin{tikzcd}[row sep=large,column sep=large]
   && \cat{C'}\, \cat{C'} \ar{dr}{\otimes} \\
   & \cat{C'}\, \cat{C'}\, \cat{C'} \ar{r}{\otimes\, \id } \ar{ur}{\id \,\otimes}  
    \ar[bend left=45,""{name=IDTT,pos=0.55},phantom]{rr} \ar[bend left=15,""{name=TIDT,pos=0.51},phantom]{rr} 
     \ar[Rightarrow,from=TIDT,to=IDTT,"a"]
    & \cat{C'}\,\cat{C'}  \ar{r}{\otimes} & |[alias=C]| \cat{C'} \ar{dd}{L} \\ 
   \cat{C}\, \cat{C}\, \cat{C} \ar[bend left=10]{ur}{G\,G\,G} \ar[bend right=10,swap,""{name=FFF,above}]{dr}{F\,F\,F} \ar{r}{\otimes\, \id } 
    \ar[bend left=12,""{name=GGGTID},phantom]{urr} \ar[bend right=10,""{name=TIDGG,pos=0.46},phantom]{urr} 
     \ar[Rightarrow,from=TIDGG,to=GGGTID,"\mu_G^{-1}\,\id"]
    \ar[bend left=30,""{name=TIDFF,pos=0.38},phantom]{ddrr} \ar[bend left=10,""{name=FFFTID,pos=0.45},phantom]{ddrr} 
     \ar[Rightarrow,from=FFFTID,to=TIDFF,"\mu_F\,\id"]
    & \cat{C}\, \cat{C} \ar{r}{\otimes}  \ar{ur}{G\,G} \ar[pos=0.7]{ddr}{F\,F} 
    \ar[bend left=12,""{name=GGTID},phantom]{urr} \ar[bend right=10,""{name=TIDG,pos=0.46},phantom]{urr} 
     \ar[Rightarrow,from=TIDG,to=GGTID,"\mu_G^{-1}",swap]
    \ar[bend right=50,""{name=FFT,pos=0.63},phantom]{drr} \ar[bend left=10,""{name=TF,pos=0.52},phantom]{drr} 
     \ar[Rightarrow,from=FFT,to=TF,"\mu_F",swap]
    & \cat{C}  \ar{ur}{G} \ar[swap]{dr}{F} 
    \ar[bend left=30,""{name=F},phantom]{dr} \ar[bend left=90,""{name=GL,pos=0.45},phantom]{dr} 
     \ar[Rightarrow,from=F,to=GL,"\lambda"] \\
   &\cat{D}\, \cat{D}\, \cat{D} \ar[swap]{dr}{\otimes\, \id } &  & \cat{D} \\
   && \cat{D\,  D}  \ar[swap]{ur}{\otimes}
  \end{tikzcd}
 \end{equation*}
 Using the associativity of $\mu_F$ and $\mu_G$, we get
 \begin{equation*}
  \begin{tikzcd}[row sep=large,column sep=large]
   && \cat{C'}\, \cat{C'} \ar{dr}{\otimes} \\
   & \cat{C'}\, \cat{C'}\, \cat{C'} \ar{ur}{\id \,\otimes}  &  & \cat{C'} \ar{dd}{L} \\ 
   \cat{C}\, \cat{C}\, \cat{C} \ar{r}{\id \,\otimes} \ar[bend left=10]{ur}{G\,G\,G} \ar[bend right=10,swap,""{name=FFF,above}]{dr}{F\,F\,F} 
    \ar[bend right=10,""{name=GGGTID,pos=0.45},phantom]{uurr} \ar[bend right=30,""{name=TIDGG,pos=0.38},phantom]{uurr} 
     \ar[Rightarrow,from=TIDGG,to=GGGTID,"\id\,\mu_G^{-1}"]
    \ar[bend left=14,""{name=TIDFF,pos=0.49},phantom]{drr} \ar[bend right=9,""{name=FFFTID,pos=0.54},phantom]{drr} 
     \ar[Rightarrow,from=FFFTID,to=TIDFF,"\id\,\mu_F"]
    & \cat{C}\,\cat{C}  \ar{dr}{F\,F} \ar[swap]{uur}{G\,G} \ar{r}{\otimes} 
     \ar[bend right=5, ""{name=TG,pos=0.53},phantom]{urr} \ar[bend left=55, ""{name=GGT,pos=0.63},phantom]{urr} 
      \ar[Rightarrow,from=TG,to=GGT,"\mu_G^{-1}",swap]
     \ar[bend left=14,""{name=TIDF,pos=0.5},phantom]{drr} \ar[bend right=9,""{name=FFTID,pos=0.54},phantom]{drr} 
      \ar[Rightarrow,from=FFTID,to=TIDF,"\mu_F",swap]
     & \cat{C} \ar{ur}{G} \ar[swap]{dr}{F}
    \ar[bend left=30,""{name=F},phantom]{dr} \ar[bend left=90,""{name=GL,pos=0.45},phantom]{dr} 
     \ar[Rightarrow,from=F,to=GL,"\lambda"] \\
   &\cat{D}\, \cat{D}\, \cat{D} \ar{r}{\id \,\otimes} \ar[swap]{dr}{\otimes\, \id } 
    \ar[bend right=12,""{name=IDTT,pos=0.52},phantom]{rr} \ar[bend right=42,""{name=TIDT,pos=0.55},phantom]{rr} 
     \ar[Rightarrow,from=TIDT,to=IDTT,"a"]
    &  \cat{D}\,\cat{D}  \ar{r}{\otimes} & \cat{D} \\
   && \cat{D\,  D}  \ar[swap]{ur}{\otimes}
  \end{tikzcd}
 \end{equation*}
 Using again  that \eqref{cylinderup} is equal to \eqref{cylinderdown},
 \begin{equation*}
  \begin{tikzcd}[row sep=large,column sep=large]
   && |[alias=CC]| \cat{C'}\, \cat{C'} \ar{dr}{\otimes} \ar{ddd}{L\,  L}  
    \ar[bend right=50,""{name=LLT,pos=0.72},phantom]{dddr} \ar[bend left=25,""{name=TL,pos=0.5},phantom]{dddr} 
     \ar[Rightarrow,from=LLT,to=TL,"\mu_L"] \\
   & \cat{C'}\, \cat{C'}\, \cat{C'} \ar{ur}{\id \,\otimes}  && \cat{C'} \ar{dd}{L} \\ 
   \cat{C}\, \cat{C}\, \cat{C} \ar{r}{\id \,\otimes} \ar[bend left=10]{ur}{G\,G\,G} \ar[bend right=10,swap,""{name=FFF,above}]{dr}{F\,F\,F} 
    \ar[bend right=10,""{name=GGGTID,pos=0.45},phantom]{uurr} \ar[bend right=30,""{name=TIDGG,pos=0.38},phantom]{uurr} 
     \ar[Rightarrow,from=TIDGG,to=GGGTID,"\id\,\mu_G^{-1}"]
    \ar[bend left=14,""{name=TIDFF,pos=0.49},phantom]{drr} \ar[bend right=9,""{name=FFFTID,pos=0.54},phantom]{drr} 
     \ar[Rightarrow,from=FFFTID,to=TIDFF,"\id\,\mu_F"]
    & \cat{C}\,\cat{C} \ar{dr}{F\,F} \ar[swap]{uur}{G\,G} 
    \ar[bend left=30,""{name=FF,pos=0.4},phantom]{dr} \ar[bend left=90,""{name=GGLL,pos=0.36},phantom]{drr} 
     \ar[Rightarrow,from=FF,to=GGLL,"\lambda\,\lambda"] \\
   &\cat{D}\, \cat{D}\, \cat{D} \ar{r}{\id \,\otimes} \ar[swap]{dr}{\otimes\, \id } 
    \ar[bend right=12,""{name=IDTT,pos=0.52},phantom]{rr} \ar[bend right=42,""{name=TIDT,pos=0.55},phantom]{rr} 
     \ar[Rightarrow,from=TIDT,to=IDTT,"a"]
    &  \cat{D}\,\cat{D}  \ar{r}{\otimes} & \cat{D} \\
   && \cat{D\,  D}  \ar[swap]{ur}{\otimes}
  \end{tikzcd}
 \end{equation*}
 Using the fact that \eqref{cylinderup} is equal to \eqref{cylinderdown} on the last two components, and that the identity commutes with $\lambda$ on the first component, this becomes
 \begin{equation*}
  \begin{tikzcd}[row sep=large,column sep=large]
   && \cat{C'}\, \cat{C'} \ar{dr}{\otimes} \ar{ddd}{L\,  L} 
    \ar[bend right=50,""{name=LLT,pos=0.72},phantom]{dddr} \ar[bend left=25,""{name=TL,pos=0.5},phantom]{dddr} 
     \ar[Rightarrow,from=LLT,to=TL,"\mu_L"] \\
   &\cat{C'}\, \cat{C'}\, \cat{C'} \ar{ur}{\id \,\otimes} \ar[near start]{dd}{L\,  L\,  L} 
    \ar[bend right=50,""{name=LLLT,pos=0.58},phantom]{ddr} \ar[bend left=60, ""{name=TLL,pos=0.28}, phantom]{ddr} 
      \ar[Rightarrow,from=LLLT,to=TLL,"\id\,\mu_L",pos=0.4,swap]
    && \cat{C'} \ar{dd}{L} \\
   \cat{C}\, \cat{C}\, \cat{C} \ar[bend left=10]{ur}{G\,G\,G} \ar[bend right=10,swap,""{name=FFF,above}]{dr}{F\,F\,F}
    \ar[bend left=20,""{name=FFF},phantom]{dr} \ar[bend left=90,""{name=GGGLLL},phantom]{dr} 
     \ar[Rightarrow,from=FFF,to=GGGLLL,"\lambda\,\lambda\,\lambda"]  \\
   &\cat{D}\, \cat{D}\, \cat{D}  \ar[swap]{r}{\id \,\otimes} \ar[swap]{dr}{\otimes\, \id } 
    \ar[bend right=12,""{name=IDTT,pos=0.52},phantom]{rr} \ar[bend right=42,""{name=TIDT,pos=0.55},phantom]{rr} 
     \ar[Rightarrow,from=TIDT,to=IDTT,"a"]
    & \cat{D\,  D}  \ar{r}{\otimes} & \cat{D} \\
   && \cat{D\,  D}  \ar[swap]{ur}{\otimes}   
  \end{tikzcd}
 \end{equation*}
 which is exactly \eqref{cubedown}, precomposed with $G\,G\,G$ and $\lambda\,\lambda\,\lambda$.
 By the universal property of Kan extensions of $(\lambda\otimes\lambda)\otimes\lambda$, the precomposition with $\lambda\,\lambda\,\lambda$ and $G\,G\,G$ can be removed, obtaining that \eqref{cubeup} is equal to \eqref{cubedown}.
\end{itemize}

We now prove that this structure also makes $L$ into the left Kan extension in $\cat{MonCat}$. Given a lax monoidal functor $X:\cat{C'}\to\cat{D}$ and a monoidal transformation $\chi:F\Rightarrow X G$, we can apply the Kan extension property in $\cat{Cat}$, so that there exists a unique $u:L\Rightarrow X$ such that
\begin{equation}\label{uniu}
  \begin{tikzcd}
   \cat{C} \ar[swap]{ddr}{G} \ar{rr}{F} 
    \ar[bend right=10,""{name=F},phantom]{rr} \ar[bend right=90,""{name=GL,pos=0.48},phantom]{rr} 
     \ar[Rightarrow,bend right=20,from=F,to=GL,"\lambda",swap]
    && \cat{D} \\
   \\
   & \cat{C'} \ar[bend left,""{name=L,below}]{uur}{L} \ar[swap,bend right,""{name=X,above}]{uur}{X}
    \ar[Rightarrow,from=L,to=X,"u"]
  \end{tikzcd}
  \quad\equiv\quad
  \begin{tikzcd}
   \cat{C} \ar[swap]{ddr}{G} \ar{rr}{F} 
    \ar[bend right=20,""{name=F},phantom]{rr} \ar[bend right=90,""{name=GX,pos=0.5},phantom]{rr} 
     \ar[Rightarrow,bend left=10,from=F,to=GX,"\chi"]
    && \cat{D} \\
   \\
   & \cat{C'}  \ar[swap,bend right]{uur}{X}
  \end{tikzcd}
 \end{equation}
 What we need to show is that this $u$ is automatically monoidal. We first prove that it respects the units,
\begin{equation}\label{unitu}
 \begin{tikzcd}
  & \cat{C'} \ar[bend left=50,""{name=X,left}]{dd}{X} \ar[bend right=20,""{name=L,pos=0.51,right}]{dd}[pos=0.8]{L}
   \ar[Rightarrow,from=L,to=X,"u"] \\
  \cat{1} \ar{ur}{e} \ar[swap]{dr}{e}
   \ar[bend left=15,""{name=ED},phantom]{dr} \ar[bend left=90,""{name=ECPL,pos=0.38},phantom]{dr} 
    \ar[Rightarrow,bend left=10,from=ED,to=ECPL,"\eta_L"] \\
  & \cat{D} 
 \end{tikzcd}
 \quad \equiv \quad
 \begin{tikzcd}
  & \cat{C'} \ar[bend left=50,""{name=ECPX,below,pos=0.}]{dd}{X} \\
  \cat{1} \ar{ur}{e} \ar[swap]{dr}{e}
   \ar[bend left=15,""{name=ED},phantom]{dr} 
    \ar[Rightarrow,bend right=20,from=ED,to=ECPX,"\eta_X",swap] \\
  & \cat{D} 
 \end{tikzcd}
\end{equation}
To obtain this, we use that $\lambda$ respects units as per \eqref{unitlambda}, and similarly $\chi$. Since $\eta_G$ is an isomorphism,~\eqref{unitu} follows if we can prove it after postcomposing with $\eta_G$,
 \begin{align*}
  &\begin{tikzcd}[ampersand replacement=\&, column sep=large, row sep=large]
   \& \cat{C} \ar{dr}{G} \\
   \cat{1} \ar{rr}{e} \ar{ur}{e} \ar[swap,""{name=un,above}]{dr}{e}
    \ar[bend left=60,""{name=ECG},phantom]{rr} \ar[bend left=18,""{name=ECP,pos=0.51},phantom]{rr} 
     \ar[Rightarrow,from=ECP,to=ECG,"\eta_G"{left},"\cong"{right}]
    \ar[bend left=15,""{name=ED},phantom]{dr} \ar[bend left=90,""{name=ECPL,pos=0.68},phantom]{dr} 
     \ar[Rightarrow,bend left=10,from=ED,to=ECPL,"\eta_L",swap]
    \& \& \cat{C'} \ar[bend left,""{name=X,above}]{dl}{X} \ar[bend right,swap,""{name=L,below}]{dl}{L}
     \ar[Rightarrow,from=L,to=X,"u"] \\
   \& \cat{D}
  \end{tikzcd} 
  \quad \equiv \quad
  \begin{tikzcd}[ampersand replacement=\&, column sep=large, row sep=large]
   \& \cat{C} \ar{dr}{G} \ar[swap]{dd}{F} 
    \ar[bend left=10,""{name=F},phantom]{dd} \ar[bend left=90,""{name=GL,pos=0.48},phantom]{dd} 
     \ar[Rightarrow,bend left=20,from=F,to=GL,"\lambda"] \\
   \cat{1} \ar{ur}{e} \ar[swap,""{name=un,above}]{dr}{e} 
    \ar[bend left=15,""{name=ED},phantom]{dr} \ar[bend left=90,""{name=ECF,pos=0.42},phantom]{dr} 
     \ar[Rightarrow,from=ED,to=ECF,"\eta_F"]
    \& \& \cat{C'} \ar[bend left,""{name=X,above}]{dl}{X} \ar[bend right,swap,""{name=L,below}]{dl}[pos=0.6]{L}
     \ar[Rightarrow,from=L,to=X,"u"] \\
   \& \cat{D}
  \end{tikzcd} \\
  \equiv\quad &\begin{tikzcd}[ampersand replacement=\&, column sep=large, row sep=large]
   \& \cat{C} \ar{dr}{G} \ar[swap]{dd}{F} 
    \ar[bend left=20,""{name=F},phantom]{dd} \ar[bend left=90,""{name=GX,pos=0.5},phantom]{dd} 
     \ar[Rightarrow,bend right=10,from=F,to=GX,"\chi"] \\
   \cat{1} \ar{ur}{e} \ar[swap,""{name=un,above}]{dr}{e} 
    \ar[bend left=15,""{name=ED},phantom]{dr} \ar[bend left=90,""{name=ECF,pos=0.42},phantom]{dr} 
     \ar[Rightarrow,from=ED,to=ECF,"\eta_F"]
    \& \& \cat{C'} \ar[bend left,""{name=X,below}]{dl}{X} \\
   \& \cat{D}
  \end{tikzcd}
  \quad \equiv \quad
  \begin{tikzcd}[ampersand replacement=\&, column sep=large, row sep=large]
   \& \cat{C} \ar{dr}{G} \\
   \cat{1} \ar[""{name=ECPX,pos=0.9,below}]{rr}{e} \ar{ur}{e} \ar[swap,""{name=un,above}]{dr}{e} 
    \ar[bend left=60,""{name=ECG},phantom]{rr} \ar[bend left=18,""{name=ECP,pos=0.51},phantom]{rr} 
     \ar[Rightarrow,from=ECP,to=ECG,"\eta_G"{left},"\cong"{right}]
    \ar[bend left=15,""{name=ED},phantom]{dr} 
     \ar[Rightarrow,bend right=15,from=ED,to=ECPX,"\eta_X",swap]
    \&  \& \cat{C'} \ar[bend left]{dl}{X} \\
   \& \cat{D}
  \end{tikzcd}
 \end{align*}
which proves the claim.

Proving compatibility with the multiplication
\begin{equation}\label{multu}
 \begin{tikzcd}
  \cat{C'}\times\cat{C'} \ar[bend right=50,swap,""{name=LL,right}]{dd}{L\times L} \ar[bend left=50,""{name=XX,left}]{dd}{X\times X} \ar{rr}{\otimes}
   \ar[bend right=30,""{name=XXT,pos=0.55},phantom]{ddrr} \ar[bend left=70,""{name=TX,pos=0.65},phantom]{ddrr} 
    \ar[Rightarrow, bend right=20, from=XXT,to=TX, "\mu_X",swap]
   && \cat{C'} \ar[bend left=50]{dd}{X} \\
  \\
  \cat{D}\times\cat{D} \ar[swap]{rr}{\otimes} && \cat{D}
  \ar[Rightarrow,from=LL,to=XX,"u\times u"]
 \end{tikzcd}
 \quad \equiv \quad
 \begin{tikzcd}
  \cat{C'}\times\cat{C'} \ar[bend right=50,swap]{dd}{L\times L} \ar{rr}{\otimes}
   \ar[bend right=60,""{name=LLT,pos=0.35},phantom]{ddrr} \ar[bend left=30,""{name=TL,pos=0.4},phantom]{ddrr} 
    \ar[Rightarrow, bend left=20, from=LLT,to=TL, "\mu_L"]
   && \cat{C'} \ar[bend right=50,swap,""{name=L,right}]{dd}{L} \ar[bend left=50,""{name=X,left}]{dd}{X} 
   \ar[Rightarrow,from=L,to=X,"u"] \\
  \\
  \cat{D}\times\cat{D} \ar[swap]{rr}{\otimes} && \cat{D}
 \end{tikzcd}
\end{equation}
 works similarly, but is a bit trickier. We use compatibility of $\lambda$ with the multiplication \eqref{multlambda},
 and similarly for $\chi$, in order to compute
 \begin{align*}
  &\begin{tikzcd}[ampersand replacement=\&, row sep=large]
   \cat{C}\times\cat{C} \ar[bend left,swap]{dr}{G\times G} \ar[swap,bend right]{dd}{F\times F} \ar{rr}{\otimes} 
    \ar[bend right=5,""{name=GGT,pos=0.58},phantom]{drrr} \ar[bend left=25,""{name=TG,pos=0.7},phantom]{drrr} 
     \ar[Rightarrow, bend right=20,from=GGT,to=TG,"\mu_G","\cong"{swap}]
    \ar[bend right=10,""{name=FF,pos=0.45},phantom]{dd} \ar[bend left=60,""{name=GGLL,pos=0.42},phantom]{dd} 
     \ar[Rightarrow,bend left=20,from=FF,to=GGLL,"\lambda\times\lambda"]
    \&\& \cat{C} \ar[bend left]{dr}{G} \\
   \& \cat{C'}\times\cat{C'} \ar[swap,bend right,""{name=LL,right}]{dl}{L\times L} \ar[bend left, pos=0.15,""{name=XX,left,pos=0.5}]{dl}{X\times X}
    \ar[Rightarrow,from=LL,to=XX,"u\times u"]
   \ar[bend right=45,""{name=XXT,pos=0.45},phantom]{dr} \ar[bend left=90,""{name=TX,pos=0.75},phantom]{dr} 
    \ar[Rightarrow, bend right=15, from=XXT,to=TX, "\mu_X"]
   \ar{rr}{\otimes} \&\& \cat{C'} \ar[bend left,""{name=X,below}]{dl}{X} \\
   \cat{D}\times\cat{D}  \ar[swap]{rr}{\otimes} \&\& \cat{D}
  \end{tikzcd} \\[4pt]
  \equiv \quad
  &\begin{tikzcd}[ampersand replacement=\&, row sep=large]
   \cat{C}\times\cat{C} \ar[bend left,swap]{dr}{G\times G} \ar[swap,bend right]{dd}{F\times F} \ar{rr}{\otimes}   
    \ar[bend right=5,""{name=GGT,pos=0.58},phantom]{drrr} \ar[bend left=25,""{name=TG,pos=0.7},phantom]{drrr} 
     \ar[Rightarrow, bend right=20,from=GGT,to=TG,"\mu_G","\cong"{swap}]
    \ar[bend right=10,""{name=FF},phantom]{dd} \ar[bend left=65,""{name=GGXX,pos=0.5},phantom]{dd} 
     \ar[Rightarrow,bend right=10,from=FF,to=GGXX,"\chi\times\chi"]
    \&\& \cat{C} \ar[bend left]{dr}{G} \\
   \&  \cat{C'}\times\cat{C'} \ar[bend left, pos=0.15]{dl}{X\times X} \ar{rr}{\otimes} 
    \ar[bend right=45,""{name=XXT,pos=0.45},phantom]{dr} \ar[bend left=90,""{name=TX,pos=0.75},phantom]{dr} 
     \ar[Rightarrow, bend right=15, from=XXT,to=TX, "\mu_X"]
    \&\& \cat{C'} \ar[bend left,""{name=X,below}]{dl}{X}  \\
   \cat{D}\times\cat{D}  \ar[swap]{rr}{\otimes} \&\& \cat{D}
  \end{tikzcd}\\[4pt]
   \equiv\quad 
   &\begin{tikzcd}[ampersand replacement=\&, row sep=large]
   \cat{C}\times\cat{C} \ar[swap,bend right]{dd}{F\times F} \ar{rr}{\otimes} 
    \ar[bend right=60,""{name=FFT,pos=0.4},phantom]{ddrr} \ar[bend left=40,""{name=TF,pos=0.48},phantom]{ddrr} 
     \ar[Rightarrow,bend left=20,from=FFT,to=TF,"\mu_F",swap] 
    \&\& \cat{C} \ar[bend right,""{name=F,above},swap]{dd}{F} \ar[bend left]{dr}{G} 
    \ar[bend right=10,""{name=F},phantom]{dd} \ar[bend left=65,""{name=GX,pos=0.5},phantom]{dd} 
     \ar[Rightarrow,bend right=10,from=F,to=GX,"\chi"] \\
   \& \phantom{ \cat{C'}\times\cat{C'} } \&\&  \cat{C'} \ar[bend left,""{name=X,below}]{dl}{X} \\
   \cat{D}\times\cat{D}  \ar[swap]{rr}{\otimes} \&\& \cat{D}
  \end{tikzcd} \\[4pt]
  \equiv \quad
  &\begin{tikzcd}[ampersand replacement=\&, row sep=large]
   \cat{C}\times\cat{C} \ar[swap,bend right]{dd}{F\times F} \ar{rr}{\otimes} 
    \ar[bend right=60,""{name=FFT,pos=0.4},phantom]{ddrr} \ar[bend left=40,""{name=TF,pos=0.48},phantom]{ddrr} 
     \ar[Rightarrow,bend left=20,from=FFT,to=TF,"\mu_F",swap] 
    \&\& \cat{C} \ar[bend right,swap]{dd}{F} \ar[bend left]{dr}{G}
    \ar[bend right=10,""{name=F},phantom]{dd} \ar[bend left=60,""{name=GL,pos=0.45},phantom]{dd} 
     \ar[Rightarrow,bend left=20,from=F,to=GL,"\lambda"] \\
   \& \phantom{ \cat{C'}\times\cat{C'} } \&\& \cat{C'} \ar[bend left,""{name=X,above}]{dl}{X} \ar[bend right,swap,""{name=L,below}]{dl}[pos=0.6]{L}
    \ar[Rightarrow,from=L,to=X,"u"] \\
   \cat{D}\times\cat{D}  \ar[swap]{rr}{\otimes} \&\& \cat{D}
  \end{tikzcd}\\[4pt]
  \equiv\quad 
  &\begin{tikzcd}[ampersand replacement=\&, row sep=large]
   \cat{C}\times\cat{C} \ar[bend left,swap]{dr}{G\times G} \ar[swap,bend right]{dd}{F\times F} \ar{rr}{\otimes}
    \ar[bend right=5,""{name=GGT,pos=0.58},phantom]{drrr} \ar[bend left=25,""{name=TG,pos=0.7},phantom]{drrr} 
     \ar[Rightarrow, bend right=20,from=GGT,to=TG,"\mu_G","\cong"{swap}]
    \ar[bend right=10,""{name=FF,pos=0.45},phantom]{dd} \ar[bend left=60,""{name=GGLL,pos=0.42},phantom]{dd} 
     \ar[Rightarrow,bend left=20,from=FF,to=GGLL,"\lambda\times\lambda"]
    \&\& \cat{C} \ar[bend left]{dr}{G} \\
   \& \cat{C'}\times\cat{C'} \ar[swap,bend right,""{name=LL,above}]{dl}{L\times L} \ar{rr}{\otimes} 
    \ar[bend right=90,""{name=LLT,pos=0.33},phantom]{dr} \ar[bend left=45,""{name=TL,pos=0.52},phantom]{dr} 
     \ar[Rightarrow, bend left=15, from=LLT,to=TL, "\mu_L",swap]
    \& \& \cat{C'} \ar[bend left,""{name=X,above}]{dl}{X} \ar[bend right,swap,""{name=L,below}]{dl}[pos=0.8]{L} 
    \ar[Rightarrow,from=L,to=X,"u"] \\
   \cat{D}\times\cat{D}  \ar[swap]{rr}{\otimes} \&\& \cat{D}
  \end{tikzcd}
 \end{align*}
 Now $\lambda\otimes\lambda$ and $G\times G$ can be canceled as above. $\mu_G$ is an isomorphism, so that it can be canceled as well. We are then left with \eqref{multu}.
\end{proof}

\paragraph{Acknowledgements.}

The authors would like to thank Brendan Fong, Seerp Roald Koudenburg, Tarmo Uustalu, and Mark Weber for the interesting discussions, useful technical comments, and patient advice.

\bibliographystyle{unsrt}
\bibliography{catprob}
\addcontentsline{toc}{section}{\bibname}

\end{document}